\documentclass[11pt]{article}
\usepackage{amsmath, amsthm, amssymb, mathrsfs, amscd, amsthm, amscd,amsfonts,graphicx}
\usepackage{indentfirst,url,xypic}
\usepackage{lipsum} 
\usepackage[all]{xy}
\usepackage{url}
\usepackage[colorlinks,plainpages]{hyperref}
\usepackage{verbatim}
\usepackage{fancyvrb}
\usepackage{listings}

\lstset{
	basicstyle=\ttfamily,
	mathescape
} 
\usepackage[colorlinks,plainpages]{hyperref}
\setlength{\parindent}{15pt}
\hypersetup{
	colorlinks=true,
	linkcolor=blue,
	filecolor=magenta,      
	urlcolor=cyan,
}

\DeclareMathOperator{\characteristic}{char}
\DeclareMathOperator{\order}{ord}

\newtheorem{Definition}{ Definition}[section]
\newtheorem{theorem}[Definition]{Theorem}
\newtheorem{proposition}[Definition]{Proposition }

\newcommand{\N}{\mathbb{N}}

\newcommand{\A}{\mathbb{A}}

\hoffset= -0.4 in
\voffset= -0.4 in
\textwidth=426pt

\begin{document}
	\title{\bf{A note on finite determinacy of matrices}}
	\author{
		\bf{Thuy Huong Pham and Pedro Macias Marques}\\
	}
	\date{}
\maketitle
{\centering{\footnotesize\textit{ Dedicated to Professor Gert-Martin Greuel
	on the occasion of his seventy-fifth birthday}}\par}
\vskip 25pt
\begin{abstract}
	In this note, we give a necessary and sufficient condition for a matrix $A\in M_{2,2}$ to be  finitely $G$-determined, where $M_{2,2}$ is the ring of $2 \times 2$ matrices whose entries are formal power series over an infinite field, and  $G$ is a group acting on $M_{2,2}$ by change of coordinates together with multiplication by invertible matrices from both sides.
\end{abstract}



\section{Introduction}\label{introduction}
Throughout this paper let $K$ be an infinite field of arbitrary characteristic and 
\[R:=K[[{\bf{x}}]]=K[[x_1,\ldots, x_s]]\] 
the formal power series ring over $K$ in $s$ variables with maximal ideal $\mathfrak{m} = \langle x_1, \ldots , x_s \rangle$. We denote by 
\[ {M}_{m,n}:=Mat(m,n, R)\]
the set of all $m\times n$ matrices with entries in $R$. 
Let $G$ denote the group
\[
G:=\left(GL(m,R)\times GL(n,R)^\text{op}\right)\rtimes Aut(R),
\]
where $GL(n,R)^\text{op}$ is the opposite group of the group $GL(n,R)$ and $Aut(R)$ is the group of automorphisms defined on $R$. The group 
$G$ acts on  $M_{m,n}$ as follows
\[
(U,V,\phi, A)\mapsto U\cdot\phi (A)\cdot V,
\]
where $A=[a_{ij}({\bf{x}})]\in M_{m,n}$, $U\in GL(m,R)$, $V\in GL(n,R)^\text{op}$,  and $\phi (A):=[\phi(a_{ij}({\bf{x}}))]=[a_{ij}(\phi ({\bf{x}}))]$ with $\phi ({\bf{x}}):=(\phi_1,\ldots,\phi_s)$, $\phi_i:=\phi(x_i)\in\mathfrak{m}$ for all $i=1,\ldots,s$. 
Two matrices $A, B \in   M_{m,n}$ are called  {\it $G$--equivalent},  denoted  $A\mathop\sim\limits^{G} B$,  if $B$ lies in the orbit of A. We say that $A\in M_{m,n}$ is \emph{$G$ $k$--determined} if for each matrix $B\in    M_{m,n}$ with $B-A\in \mathfrak{m}^{k+1}\cdot   M_{m,n}$, we have $B\mathop\sim\limits^{G} A$, i.e. if $A$ is $G$-equivalent to every matrix which coincides with $A$ up to and including terms of order $k$. A matrix $A$ is called {\it finitely $G$--determined} if there exists a positive integer $k$ such that it is $G$ $k$--determined.
\vskip 5pt
Over the complex numbers, the classical criterion for finite determinacy says that a matrix $A\in M_{m,n}$ is finitely $G$-determined if and only if the tangent space at $A$ to the orbit $GA$ has finite codimension in $M_{m,n}$ (see \cite{BK16,Mat68,Wal81}). Over fields of arbitrary characteristic, finite $G$--determinacy was first studied for one power series,  i.e. $m=n=1$, as a key ingredient for classification of singularities (see \cite{BGM12,GK90}), and has been developed to matrices of power series recently in \cite{GP16,GP17a, Pha16}. It was shown in \cite{GP16, GP17b} that in positive characteristic the tangent space to the orbit $GA$  in general does not coincide with the image of the tangent map of the orbit map. For $A\in M_{m,n}$ instead of the tangent space we consider  the $R$-submodule of $M_{m,n}$
\[
\widetilde T_A(GA):=\langle E_{m, pq}\cdot A\rangle +\langle A\cdot E_{n, hl}\rangle+ \mathfrak{m}\cdot\left\langle\frac{\partial A}{\partial x_\nu}\right\rangle,
\]
which is the image of the tangent map of the orbit map $G\to GA$, and call it the {\textit {tangent image}} at $A$ to the orbit $GA$ in \cite{GP16}.  Here $\langle E_{m, pq}\cdot A\rangle$ is the $R$-submodule generated by  $E_{m, pq}\cdot A$, $p,q=1,\ldots,m$, with $E_{m,pq}$ the $(p,q)$-th canonical matrix of $Mat(m,m,R)$ ($1$ at place $(p,q)$ and $0$ elsewhere) and $\left\langle\frac{\partial A}{\partial x_\nu}\right\rangle$ is the $R$-submodule generated by the matrices $\frac{\partial A}{\partial x_\nu} = \left[\frac{\partial a_{ij}}{\partial x_\nu} ({\bf x})\right], \nu = 1, \ldots, s$. 
By replacing $\mathfrak{m}$ by $R$ in $\widetilde T_A(GA)$ we call the corresponding submodule $\widetilde T^e_A(GA)$ the {\textit {extended tangent image}} at $A$ to the orbit $GA$.
In arbitrary characteristic,  the following equivalent sufficient conditions for finite determinacy were obtained in  \cite[Proposition 4.2 and Theorem 4.3]{GP16}.
\begin{proposition}\label{sufficient}	
	{\rm 1.} Let $A\in \mathfrak{m}\cdot M_{m,n}$. Then $A$ is finitely $G$-determined if one of the following  equivalent statements holds:
	\begin{enumerate}
		\item[\rm (i)] \label{sufficient 1}  $\dim_K\big(\mathfrak{m}\cdot M_{m,n}/\widetilde T_A(GA)\big) =:d<\infty$.
		\item[\rm (ii)]  \label{sufficient 2} $\dim_K M_{m,n} \big{/}\widetilde T_A^e(GA) =:d_e<\infty.$
		\item[\rm (iii)]  \label{sufficient 3}  $\mathfrak{m}^k \subset I_{mn}\left(\Theta_{(G,A)}\right)$ for some positive integer $k$, where
		\[
		R^t\xrightarrow{\Theta_{(G,A)}} M_{m,n}  \to M_{m,n}/\widetilde T_A^e(GA)\to 0
		\]
		is a presentation of $M_{m,n}/\widetilde T_A^e(GA)$ and $I_{mn}(\Theta_{(G,A)})$ is the ideal of $mn\times mn$ minors of $\Theta_{(G,A)}$.
	\end{enumerate}
	Furthermore, if the condition (i) (resp. (ii) and (iii))  above holds  then $A$ is $G$ ($2c-\order(A)+2$)-determined, where $c= d$ (resp. $d_e$ and k) and $\order(A)$ is the minimum of the orders of entries of $A$. 
	
	\noindent{\rm 2.} If  $\characteristic(K)=0$ then the converse of 1. also holds.
\end{proposition}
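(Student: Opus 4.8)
The plan is to treat the two assertions of Proposition~\ref{sufficient} separately. For assertion~1 I would first reduce conditions (i)--(iii) to the single statement that $\mathfrak m^{c+1}\cdot M_{m,n}\subseteq\widetilde T_A(GA)$, and then prove the determinacy bound by normalizing the perturbation order by order (a Thom--Levine type homotopy carried out formally, since $K$ has arbitrary characteristic); for assertion~2 I would pass to jet spaces and use that in characteristic $0$ orbit maps are separable. The equivalence (i)$\Leftrightarrow$(ii) is a Nakayama/dimension count: $\widetilde T^e_A(GA)/\widetilde T_A(GA)$ is a quotient of the $K$-span of the $s$ classes $\partial A/\partial x_\nu$, hence of dimension $\le s$, while $M_{m,n}/\mathfrak m M_{m,n}\cong K^{mn}$; combining the exact sequences $0\to\widetilde T^e_A(GA)/\widetilde T_A(GA)\to M_{m,n}/\widetilde T_A(GA)\to M_{m,n}/\widetilde T^e_A(GA)\to0$ and (using that $A\in\mathfrak m M_{m,n}$ forces $\widetilde T_A(GA)\subseteq\mathfrak m M_{m,n}$) $0\to\mathfrak m M_{m,n}/\widetilde T_A(GA)\to M_{m,n}/\widetilde T_A(GA)\to K^{mn}\to0$ gives $d<\infty\Leftrightarrow d_e<\infty$. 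For (ii)$\Leftrightarrow$(iii): $I_{mn}(\Theta_{(G,A)})$ is the zeroth Fitting ideal of $M_{m,n}/\widetilde T^e_A(GA)$, and over the local ring $R$ a finitely generated module $N$ has $\dim_K N<\infty$ iff $\operatorname{Supp}N\subseteq\{\mathfrak m\}$ iff $\mathfrak m^k\subseteq\operatorname{Fitt}_0N$ for some $k$.

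Next, from (i) one gets $\mathfrak m^{d+1}M_{m,n}\subseteq\widetilde T_A(GA)$: the descending chain of $K$-subspaces $V_j:=(\mathfrak m^jM_{m,n}+\widetilde T_A(GA))/\widetilde T_A(GA)$ of $\mathfrak m M_{m,n}/\widetilde T_A(GA)$ has the property that $V_j=V_{j+1}$ implies $V_j=0$ — apply Nakayama to the finitely generated module $\mathfrak m^jM_{m,n}/(\mathfrak m^jM_{m,n}\cap\widetilde T_A(GA))\cong V_j$, which $V_j=V_{j+1}$ forces to equal $\mathfrak m$ times itself — so the chain strictly decreases until it vanishes. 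The same argument from (ii),(iii), or simply multiplying by $\mathfrak m$, yields $\mathfrak m^{c+1}M_{m,n}\subseteq\widetilde T_A(GA)$ with $c=d_e$, resp.\ $k$; I also record that every generator of $\widetilde T_A(GA)$ has order $\ge\order(A)$, so $\widetilde T_A(GA)\subseteq\mathfrak m^{\order(A)}M_{m,n}$. Now put $o=\order(A)$, $k_0=2c-o+2$, take $B$ with $B-A\in\mathfrak m^{k_0+1}M_{m,n}$, and write $B=A+P$ with $\ell:=\order(P)\ge k_0+1$. The key step — and the main obstacle — is: there exist $U\in GL(m,R)$, $V\in GL(n,R)$, $\phi\in Aut(R)$, all congruent to the identity modulo $\mathfrak m^{\ell-c-1}$, with $U\cdot\phi(A+P)\cdot V=A+P'$ and $\order(P')>\ell$. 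Granting this, iteration produces a sequence of such triples whose composite converges $\mathfrak m$-adically to an element of $G$ (the orders of the corrections grow without bound), carrying $B$ to $A$; hence $B\mathop\sim\limits^{G}A$ and $A$ is $G$ $k_0$-determined.

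To carry out the key step: since $A+P\equiv A\pmod{\mathfrak m^{\ell}M_{m,n}}$, the generators of $\widetilde T_{A+P}(G(A+P))$ and of $\widetilde T_A(GA)$ agree modulo $\mathfrak m^{\ell}M_{m,n}$, so $\widetilde T_{A+P}(G(A+P))+\mathfrak m^{\ell}M_{m,n}=\widetilde T_A(GA)+\mathfrak m^{\ell}M_{m,n}\supseteq\mathfrak m^{c+1}M_{m,n}$; a short Artin--Rees/Krull bootstrap (multiply up by powers of $\mathfrak m$ and intersect, using $\ell\ge k_0+1>c+1$) upgrades this to $\mathfrak m^{c+1}M_{m,n}\subseteq\widetilde T_{A+P}(G(A+P))$. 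Hence $P\in\mathfrak m^{\ell}M_{m,n}=\mathfrak m^{\ell-c-1}\cdot\mathfrak m^{c+1}M_{m,n}\subseteq\mathfrak m^{\ell-c-1}\widetilde T_{A+P}(G(A+P))$, i.e.\ $P=u(A+P)+(A+P)v+\sum_\nu\psi_\nu\,\partial(A+P)/\partial x_\nu$ with $u,v$ of order $\ge\ell-c-1$ and the $\psi_\nu$ of order $\ge\ell-c$. Taking $U=(I+u)^{-1}$, $V=(I+v)^{-1}$, and $\phi$ the automorphism inverse to $x\mapsto x+\psi$, the expansion of $U\phi(A+P)V$ cancels $P$ against the linear terms and leaves $P'$ a sum of quadratic corrections — each a product of two of $u,v,\psi$ (or a Taylor remainder of $\phi$) with a block of order $\ge o$ — so $\order(P')\ge2(\ell-c-1)+o=2\ell-2c-2+o$, which exceeds $\ell$ precisely because $\ell\ge k_0+1=2c-o+3$. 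Note that no vector field is integrated: all three group elements are written down explicitly, so the argument is characteristic-free; the price is that $P$ must be realized inside $\widetilde T_{A+P}(G(A+P))$ with coefficients of order $\ge\ell-c-1$, and this is exactly what forces the bound $2c-\order(A)+2$ rather than the naive $c+1$.

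For the converse, assume $\characteristic(K)=0$ and that $A$ is $G$ $k$-determined; I claim (i). For $N\ge k$ put $J^N:=M_{m,n}/\mathfrak m^{N+1}M_{m,n}$, a finite-dimensional $K$-space; the $N$-jet group $G^{(N)}=\bigl(GL(m,R/\mathfrak m^{N+1})\times GL(n,R/\mathfrak m^{N+1})^{\mathrm{op}}\bigr)\rtimes Aut^{(N)}(R)$ is a linear algebraic group acting algebraically on $J^N$, with the orbit of $j^N(A)$ equal to $\{\,j^N(B):B\mathop\sim\limits^{G}A\,\}$. By $k$-determinacy this orbit contains the affine subspace $j^N(A)+\mathfrak m^{k+1}M_{m,n}/\mathfrak m^{N+1}M_{m,n}$, so the orbit, hence its Zariski tangent space at $j^N(A)$, has dimension $\ge\dim_K\mathfrak m^{k+1}M_{m,n}/\mathfrak m^{N+1}M_{m,n}$. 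Here $\characteristic(K)=0$ is used: the orbit map $G^{(N)}\to G^{(N)}j^N(A)$ is then separable, so that tangent space equals the image of the differential at the identity, i.e.\ the image of the infinitesimal $\operatorname{Lie}(G^{(N)})$-action on $j^N(A)$, which is precisely the image of $\widetilde T_A(GA)$ in $J^N$. Therefore $\dim_K\bigl(M_{m,n}/(\widetilde T_A(GA)+\mathfrak m^{N+1}M_{m,n})\bigr)\le\dim_K M_{m,n}/\mathfrak m^{k+1}M_{m,n}$ uniformly in $N$; letting $N\to\infty$, the modules $M_{m,n}/(\widetilde T_A(GA)+\mathfrak m^{N+1}M_{m,n})$ form a surjective inverse system of $K$-spaces of bounded dimension, hence of eventually constant dimension, and $M_{m,n}/\widetilde T_A(GA)$ embeds into the limit by Krull's intersection theorem, so $\dim_K M_{m,n}/\widetilde T_A(GA)<\infty$, which is (i). The separability of the orbit map — equivalently, that the image of the tangent map of the orbit map equals the tangent space of the orbit — is the crux, and it fails in positive characteristic (this is exactly the discrepancy recalled in the introduction), so this direction genuinely requires $\characteristic(K)=0$.
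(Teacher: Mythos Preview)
The paper does not prove Proposition~\ref{sufficient}: it is quoted verbatim from \cite[Proposition~4.2 and Theorem~4.3]{GP16} and used as input for the rest of the note, so there is no proof here to compare your attempt against.

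For what it is worth, your outline is the standard route to such statements and is essentially what one finds in \cite{GP16,Pha16}: the equivalence of (i)--(iii) via Nakayama and the identification of $I_{mn}(\Theta_{(G,A)})$ with the zeroth Fitting ideal; the inclusion $\mathfrak m^{c+1}M_{m,n}\subseteq\widetilde T_A(GA)$ followed by an explicit order-by-order normalization (writing down $U,V,\phi$ rather than integrating a vector field, so that the argument is characteristic-free); and, for the converse in characteristic~$0$, passage to jet spaces together with separability of the orbit map. Your order bookkeeping in the key step --- $\order(P')\ge 2(\ell-c-1)+o>\ell$ precisely when $\ell\ge 2c-o+3$ --- is the computation that pins down the bound $2c-\order(A)+2$.
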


The question whether in positive characteristic  the finite codimension of $\widetilde T_A(GA)$ is necessary for a matrix $A\in M_{m,n}$ to be finitely $G$-determined for arbitrary $m$ and $n$  remains open (see \cite[Conjecture 1.3]{GP17a}).
For the case of a one column matrix $A\in M_{m,1}$, the finite codimension of $\widetilde T_A(GA)$ is equivalent to finite $G$-determinacy of $A$ in arbitrary characteristic (see \cite{GP17a}). 
\vskip 5pt 	
The above question is answered positively for the case of $2\times 2$ matrices in this short note, where we  prove that the finite codimension of $\widetilde T_A(GA)$ is a necessary and sufficient criterion for a matrix $A\in M_{2,2}$ to be finitely $G$-determined. In order to do that we first prove that there exist finitely $G$-determined $2\times 2$ matrices $A$ of homogeneous polynomials of arbitrarily high order by showing that the ideal generated by the maximal minors of a  presentation matrix $\Theta_{(G,A)}$ of the $R$-module $M_{2,2}/\widetilde T^e_A(GA)$ is Artinian (Proposition \ref{existence}). 		
\vskip 5pt
The main result of the paper is the following theorem.
\begin{theorem}\label{main}
	Let $A\in \mathfrak{m}\cdot M_{2,2}$. Then the following are equivalent:
	\begin{enumerate}
		\item [\rm 1.] $A$ is finitely $G$-determined.
		\item [\rm 2.] $\dim_K M_{2,2}/\widetilde T_A(GA)<\infty.$  
	\end{enumerate}
\end{theorem}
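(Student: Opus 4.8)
The plan is to prove the non-trivial implication $1.\Rightarrow 2.$, since $2.\Rightarrow 1.$ is already contained in Proposition \ref{sufficient}, part 1(i). So assume $A\in\mathfrak m\cdot M_{2,2}$ is finitely $G$-determined but that, for contradiction, $\dim_K M_{2,2}/\widetilde T_A(GA)=\infty$. The overall strategy is to manufacture, from this hypothesis, a one-parameter family of matrices $B_t$ that agree with $A$ to arbitrarily high order yet are pairwise non-$G$-equivalent, contradicting finite determinacy. The engine for producing such a family is the content of Proposition \ref{existence}: there exist finitely $G$-determined $2\times 2$ matrices $C$ of homogeneous polynomials of arbitrarily large order $k$, namely ones for which $I_4(\Theta_{(G,C)})$ is $\mathfrak m$-primary (Artinian). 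First I would record the elementary but crucial fact that $\widetilde T_A(GA)$ depends only on finitely many jets: more precisely, if $B\equiv A \bmod \mathfrak m^{k+1}M_{2,2}$ then $\widetilde T_A(GA)$ and $\widetilde T_B(GB)$ agree modulo $\mathfrak m^{k}M_{2,2}$ (the only place a derivative lowers the order by one), and hence $\dim_K M_{2,2}/\widetilde T_A(GA)$ being infinite forces, for every $k$, some "escaping" monomial matrix of degree about $k$ that does not lie in $\widetilde T_A(GA)+\mathfrak m^{k+1}M_{2,2}$.

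The key construction is then the following. Fix a large order $k$ and, using Proposition \ref{existence}, pick a homogeneous $2\times2$ matrix $H$ of degree $k$ with $I_4(\Theta_{(G,H)})$ Artinian, so $H$ is finitely $G$-determined with determinacy bound depending only on $k$. Consider the truncation $A_{\le N}$ of $A$ to order $N\gg k$, and look at the perturbed family $A_{\le N}+tH$ for $t\in K$ (or for $t$ ranging over a curve). I would argue that because $\dim_K M_{2,2}/\widetilde T_A(GA)=\infty$, one can choose $H$ (of suitable degree, in the orbit-escaping direction) so that the matrices $A_{\le N}+tH$ for distinct values of $t$ are \emph{not} $G$-equivalent: if they were all equivalent to $A_{\le N}$, then the direction $H$ would have to be tangent to the orbit, i.e. $H\in \widetilde T^e_{A}(GA)$ up to higher order, which we can prevent by choosing $H$ to represent a nonzero class in the infinite-dimensional quotient $M_{2,2}/\widetilde T_A(GA)$ in degree $k$. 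Since $K$ is infinite this produces infinitely many pairwise non-equivalent matrices all congruent to $A$ modulo $\mathfrak m^{k+1}$, contradicting $G$ $k'$-determinacy of $A$ for every $k'\ge k$. Running $k\to\infty$ then contradicts finite $G$-determinacy of $A$ outright.

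The main obstacle—and the place where the $2\times2$ hypothesis is genuinely used—is to turn "$H$ is not in the tangent image" into "$A_{\le N}+tH$ are pairwise non-$G$-equivalent." A priori $H$ could fail to be tangent to the orbit of $A_{\le N}$ yet the orbit could still sweep through all the $A_{\le N}+tH$ via a curved motion. To rule this out I would use the finite determinacy of $H$ itself: the Artinian condition on $I_4(\Theta_{(G,H)})$ makes the orbit of $H$ of finite codimension, so the $G$-action near $H$ is well understood, and a deformation argument (à la the classical finite-determinacy proofs, integrating vector fields / using the Artin-type approximation that underlies Proposition \ref{sufficient}) shows that the $G$-orbit of a fixed matrix meets the affine line $\{A_{\le N}+tH\}$ in only finitely many points unless $H$ lies in $\widetilde T^e$. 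Concretely I expect to reduce, via $G$-equivalence, to the situation where the relevant "non-tangent monomial" of $H$ is a modulus: it appears with an essential coefficient $t$ that cannot be normalised away, because normalising it away would require an element of $\widetilde T_A(GA)$ killing that monomial, contradicting its escaping. Assembling this—choosing the degrees $k$, the truncation order $N$, and the escaping direction $H$ compatibly, and checking the determinacy bounds line up—will be the technical heart of the argument; the rest is the bookkeeping of orders already set up in the discussion preceding Proposition \ref{sufficient}.
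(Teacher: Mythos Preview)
Your proposal has a genuine gap precisely at the step you flag as the main obstacle. You want to deduce that the matrices $A_{\le N}+tH$ are pairwise non-$G$-equivalent from $H\notin\widetilde T_A(GA)$. But $\widetilde T_A(GA)$ is only the \emph{image of the tangent map} of the orbit map $G\to GA$; in positive characteristic this map can be inseparable, so $\widetilde T_A(GA)$ may be strictly contained in the actual tangent space to the orbit. Hence $H$ can fail to lie in the tangent image while the whole line $\{A+tH:t\in K\}$ sits inside $GA$. This is not a technical wrinkle: it is exactly the phenomenon that makes the implication $1.\Rightarrow 2.$ nontrivial in positive characteristic and is why Proposition~\ref{sufficient} gives only one direction there. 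Your proposed repair via finite determinacy of $H$ does not help either: the matrices produced by Proposition~\ref{existence} have the very special shape $f_{ij}=\sum_k c_{ij}^{(k)}x_k^{\,N}$ with generic coefficients, and there is no reason an ``escaping direction'' for $A$ should be of this form; and even if $H$ were finitely determined, that controls the orbit of $H$, not how the orbit of $A$ meets the line $A+tH$.

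The paper's argument runs in the opposite direction and sidesteps this issue entirely. One takes $B$ as in Proposition~\ref{existence} with order $N>k$ (so $\dim_K M_{2,2}/\widetilde T^e_B(GB)<\infty$ is \emph{known}), forms the family $B_t=B+tA$ over $K[t][[\mathbf{x}]]$, and applies the semi-continuity of Proposition~\ref{semi-continuity} to obtain $\dim_K M_{2,2}/\widetilde T^e_{B_{t_0}}(GB_{t_0})<\infty$ for some $t_0\ne 0$. Since $B$ has order $N>k$, the matrix $B_{t_0}=B+t_0A$ agrees with $t_0A$ modulo $\mathfrak m^{k+1}$, so $G$ $k$-determinacy of $A$ gives $A\mathop\sim\limits^{G} t_0A\mathop\sim\limits^{G} B_{t_0}$, and the finite codimension transfers to $A$. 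No passage from ``not in the tangent image'' to ``not in the orbit'' is ever needed; Proposition~\ref{existence} supplies a base point with finite codimension, not a perturbation direction.
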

\section{Proof of Theorem \ref{main}}\label{sect 2}
We show in Proposition \ref{existence} the important fact that there exist finitely $G$-determined matrices in $M_{2,2}$ of arbitrarily high order in arbitrary characteristic. Its proof  shows furthermore that the coefficients of the entries of such a matrix belong to a Zariski open subset of $K^{4s}$.
\begin{proposition}\label{existence}
	Let $\characteristic(K)=p\ge 0$ and ${B=\big[
		\begin{smallmatrix}
		f_{11}&f_{12}\\ 
		f_{21}&f_{22}
		\end{smallmatrix}
		\big]}$, where 
	\[
	{f_{ij}=c_{ij}^{(1)}x_1^{\,N} + \cdots + c_{ij}^{(s)}x_s^{\,N}}\in K[[{\bf x}]],
	\]
	$p\nmid N$ if $p>0$, and $c_{ij}^{(k)}$ are general elements in $K$. Then 
	$$\dim_KM_{2,2}/\widetilde T^e_B(GB)<\infty$$
	and $B$ is finitely $G$-determined.
\end{proposition}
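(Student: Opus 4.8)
The plan is to use criterion (iii) of Proposition \ref{sufficient}, which reduces finite $G$-determinacy to showing that some power of $\mathfrak{m}$ lies in the ideal $I_{4}(\Theta_{(G,B)})$ of maximal ($4\times 4$) minors of a presentation matrix of the $R$-module $M_{2,2}/\widetilde T^e_B(GB)$; equivalently, that this quotient module has finite $K$-dimension. First I would write down explicit generators of the extended tangent image $\widetilde T^e_B(GB)$: the four matrices $E_{2,pq}\cdot B$, the four matrices $B\cdot E_{2,hl}$, and the $s$ partial derivatives $\partial B/\partial x_\nu$. Since $B$ has the very rigid shape $f_{ij}=\sum_k c_{ij}^{(k)}x_k^{N}$, each partial derivative is $\partial B/\partial x_\nu=N x_\nu^{N-1}\,[c_{ij}^{(\nu)}]$, a constant matrix scaled by $x_\nu^{N-1}$; the hypothesis $p\nmid N$ ensures $N$ is a unit, so these contribute the submodule $\sum_\nu x_\nu^{N-1}\langle C_\nu\rangle$ where $C_\nu=[c_{ij}^{(\nu)}]$. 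Identifying $M_{2,2}$ with $R^4$, I would assemble the $4\times(8+s)$ (or slightly larger) presentation matrix $\Theta$ whose columns are the coordinate vectors of these generators, with entries that are either degree-$N$ forms (from the $E\cdot B$ and $B\cdot E$ terms) or $x_\nu^{N-1}$ times a scalar (from the derivative terms).

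The core of the argument is a genericity computation: I would show that for general choices of the $4s$ coefficients $c_{ij}^{(k)}$, the ideal $I_4(\Theta)$ contains a power of $\mathfrak{m}$, i.e. $V(I_4(\Theta))=\{0\}$ in $\A^s$ (or in $\mathrm{Proj}$). The strategy is to exhibit one explicit matrix of coefficients — a "test configuration" — for which one can by hand locate enough nonvanishing $4\times 4$ minors to cut out only the origin, and then invoke that the non-vanishing of a given minor is a Zariski-open condition on $(c_{ij}^{(k)})\in K^{4s}$; a finite intersection of nonempty Zariski-open sets over an infinite field is nonempty, giving the claimed Zariski-open dense locus of good coefficients. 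Concretely, at a point $\mathbf{x}=a\ne 0$ with, say, $a_\nu\ne 0$, one can use the four derivative columns $a_\mu^{N-1}C_\mu$ for $\mu$ in the support of $a$ together with the $E\cdot B$ and $B\cdot E$ columns evaluated at $a$; for generic $C_\mu$ these span all of $K^4$, so some $4\times4$ minor is nonzero at $a$. Handling all $a\neq 0$ simultaneously (not just a single one) is what forces the Zariski-open formulation, and one should be slightly careful when $s<4$, using the generators $E\cdot B,\ B\cdot E$ to make up the rank; the homogeneity of everything means it suffices to check non-vanishing on the projective variety $\mathrm{Proj}$, which is covered by the standard affine charts $x_\nu\neq 0$.

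Once $V(I_4(\Theta))=\{0\}$ is established, $\sqrt{I_4(\Theta)}=\mathfrak m$, hence $\mathfrak m^k\subset I_4(\Theta)$ for some $k$ by the Nullstellensatz (valid over the infinite field $K$ after passing to $\bar K$, since the minors have coefficients in the prime field extended by the $c$'s and the conclusion $\mathfrak m^k \subseteq I_4(\Theta)$ descends). By Proposition \ref{sufficient}(iii) this yields $\dim_K M_{2,2}/\widetilde T^e_B(GB)<\infty$ and finite $G$-determinacy of $B$, with an explicit determinacy bound $2k-\order(B)+2=2k-N+2$. I expect the main obstacle to be the genericity/linear-algebra heart of step two: verifying that for a suitable explicit coefficient matrix the maximal minors of $\Theta$ have no common zero away from the origin — in particular choosing the test configuration cleverly enough (perhaps $C_\nu$ with entries depending on $\nu$ in a Vandermonde-like fashion) that the rank-$4$ condition is transparent in every chart $x_\nu\neq 0$, including the degenerate small-$s$ cases. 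The remaining bookkeeping — writing out $\Theta$, translating "finite colength'' through Proposition \ref{sufficient}, and the descent of the Nullstellensatz conclusion — is routine.
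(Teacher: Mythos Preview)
Your plan is the same as the paper's: invoke criterion~(iii) of Proposition~\ref{sufficient} and show that $V(I_4(\Theta_{(G,B)}))=\{0\}$ for generic coefficients $c_{ij}^{(k)}$. The paper organises the verification a little differently from what you sketch: rather than fixing one test configuration and checking all points at once, it works with general $c$ and uses specific minors to cut down a hypothetical common zero $P$ step by step --- first $M_{1234}=\det(B)^2$ forces $\det B(P)=0$ (ruling out points with a single nonzero coordinate), then minors built from four derivative columns force $P$ to have at most three nonzero coordinates, and finally six mixed minors $M_{i_1,i_2,9,10}$ together with $\det B$ give a linear system in the ``monomial variables'' $y_{ij}=x_i^Nx_j^N$ whose nonsingularity is checked by a single specialisation. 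This structured reduction is what replaces your proposed brute-force check of a Vandermonde-like test configuration.

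One small slip to fix: your stated reason for Zariski-openness of the good locus (``non-vanishing of a given minor is open; a finite intersection of nonempty opens is nonempty'') does not work as written, because the minor witnessing rank~$4$ depends on the point $a$, and there are infinitely many $a$. The correct (and standard) argument is that the incidence locus $\{(c,[a]):\text{all }4\times4\text{ minors vanish at }a\}\subset\mathbb A^{4s}\times\mathbb P^{s-1}$ is closed and the projection to $\mathbb A^{4s}$ is proper, so its image is closed; exhibiting one good $c$ then shows the complement is nonempty open. With that adjustment your outline is sound.
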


\begin{proof}
	We first claim that the ideal of $4\times 4$ minors of the presentation matrix 
	\[
	M:=\Theta_{(G,B)}=
	\left[ {\begin{array}{*{20}c}
		{f_{11}} & f_{12} & 0 & 0 & f_{11} & f_{21} & 0 & 0 & \frac{\partial f_{11}}{\partial x_1} & \cdots & \frac{\partial f_{11}}{\partial x_s} \\
		f_{21} & f_{22} & 0 & 0 & 0 & 0 & f_{11} & f_{21} & \frac{\partial f_{21}}{\partial x_1} & \cdots & \frac{\partial f_{21}}{\partial x_s} \\
		0 & 0 & f_{11} & f_{12} & f_{12} & f_{22} & 0 & 0 & \frac{\partial f_{12}}{\partial x_1} & \cdots & \frac{\partial f_{12}}{\partial x_s} \\
		0 & 0 & f_{21} & f_{22} & 0 & 0 & f_{12} & f_{22} & \frac{\partial f_{22}}{\partial x_1} & \cdots & \frac{\partial f_{22}}{\partial x_s} 
		\end{array}} \right]
	\]
	in $K[{\bf x}]$ is Artinian.
	
	Indeed, let ${P=(a_1,\ldots,a_s)}$ be a point where all ${4 \times 4}$ minors of $M$ vanish. Denote by $M_{i_1i_2i_3i_4}$ the ${4 \times 4}$ minor of $M$ obtained from columns ${i_1,\ldots,i_4}$. Given the generality of the coefficients $c_{ij}^{(k)}$, we may assume that the determinant of $B$ does not vanish in any of the points ${(1,0,\ldots,0),\ldots,(0,\ldots,0,1)}$. We may also assume that all minors (of any size) of the matrix
	\[
	\begin{bmatrix}
	c_{11}^{(1)} & \cdots & c_{11}^{(s)} \\[1ex] 
	c_{21}^{(1)} & \cdots & c_{21}^{(s)} \\[1ex] 
	c_{12}^{(1)} & \cdots & c_{12}^{(s)} \\[1ex] 			c_{22}^{(1)} & \cdots & c_{22}^{(s)} 
	\end{bmatrix}
	\] 
	are non-zero. 
	
	First note that ${M_{1,2,3,4}=\det(B)^2}$, so the determinant of $B$ must vanish at $P$. Now observe that if ${s\ge4}$ then a ${4 \times 4}$ minor of $M$ taken from four of the last $s$ columns can be written as
	\[
	M_{i_1i_2i_3i_4}=
	\begin{vmatrix}
	c_{11}^{(i_1)} & c_{11}^{(i_2)} & c_{11}^{(i_3)} & c_{11}^{(i_4)} \\[1ex] 
	c_{21}^{(i_1)} & c_{21}^{(i_2)} & c_{21}^{(i_3)} & c_{21}^{(i_4)} \\[1ex] 
	c_{12}^{(i_1)} & c_{12}^{(i_2)} & c_{12}^{(i_3)} & c_{12}^{(i_4)} \\[1ex] 
	c_{22}^{(i_1)} & c_{22}^{(i_2)} & c_{22}^{(i_3)} & c_{22}^{(i_4)} 
	\end{vmatrix}
	\cdot N^4\ x_{i_1}^{\,N-1}\ x_{i_2}^{\,N-1}\ x_{i_3}^{\,N-1}\ x_{i_4}^{\,N-1}.
	\]
	Since the determinant is non-zero, at least one of each four coordinates of $P$ must be zero. In other words, $P$ has at most $3$ non-zero coordinates.  Without loss of generality, we may assume that ${a_4=\cdots=a_s=0}$. Suppose that $P$ is non-zero. Then at least two of $a_1$, $a_2$, and $a_3$ are non-zero, given our assumptions on the vanishing of $\det(B)$, and we may assume that ${a_1a_2\ne0}$. Note that for ${1\le i_1<i_2\le 8}$,
	\[
	M_{i_1,i_2,9,10}=F\cdot N^2x_1^{\,N-1}x_2^{\,N-1},
	\]
	where $F$ is a form in degree $2N$, involving only the $\binom{s+1}{2}$ monomials of type $x_i^{N}x_j^{N}$, with ${i\le j}$. Writing ${y_{ij}=x_i^{N}x_j^{N}}$, we can regard $F$ as a linear form on the variables $y_{ij}$. Now, since ${a_1a_2\ne0}$, we see that $F$ vanishes on $P$. Together with $\det(B)$, minors $M_{1,5,9,10}$, $M_{2,4,9,10}$, $M_{3,7,9,10}$, $M_{6,8,9,10}$, and $M_{3,6,9,10}$ form a system of $6$ linear equations on the variables $y_{ij}$. Since $P$ satisfies ${y_{ij}=0}$ for ${j>3}$, we can regard this as a system on the $6$ variables $y_{11}, y_{12}, y_{13}, y_{22}, y_{23}$, and $y_{33}$. Therefore if the system is independent, the only solution is zero. We can check that this is indeed the case by taking one parameter $a$, and assigning in the system of equations ${c_{11}^{(1)}=a}$, ${c_{12}^{(3)}=c_{21}^{(2)}=c_{22}^{(1)}=c_{22}^{(2)}=c_{22}^{(3)}=1}$, and ${c_{ij}^{(k)}=0}$ otherwise. Then the determinant of the system is $a^7+a^6$, which is non-zero.  This implies that $P=0$ for a general choice of $c_{ij}^{(k)}$, which finishes the proof of the claim.
	
	Applying now Proposition \ref{sufficient}, the statements follow.
\end{proof}

We need in addition the semi-continuity of the $K$-dimension of a $1$-parameter family of modules over a power series ring, which was obtained in \cite[Proposition 3.4]{GP17a}.

\begin{proposition}\label{semi-continuity}
	Let  $P=K[t][[{\bf x}]]$, 	where ${\bf x}=(x_1,\ldots,x_s)$, and $M$ a finitely generated $P$-module. For $t_0\in K$, set
	\begin{align*}
		M(t_0):&=M\mathop\otimes\limits_{K[t]}\ (K[t]/\langle t-t_0\rangle)
		\cong M/\langle t-t_0\rangle\cdot M.
	\end{align*}
	Then there is a nonempty open neighborhood $U$ of $\ 0$ in $\A_K^1$ such that for all $t_0\in U$, we have
	\[\dim_KM(t_0)\le\dim_KM(0).\]
\end{proposition}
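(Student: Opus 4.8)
The plan is to compare the two fibre dimensions after truncating in the $\mathbf x$-direction, where the comparison becomes the lower semicontinuity of the rank of a matrix of polynomials in $t$, and then to control the truncation level by a separate finiteness argument. Write $A := K[[\mathbf x]] = P/tP$, let $\mathfrak m = \langle x_1,\ldots,x_s\rangle$ be its maximal ideal, and assume $c := \dim_K M(0)<\infty$, as otherwise the inequality is vacuous. Fix a finite presentation $P^{a}\xrightarrow{\Phi}P^{b}\to M\to 0$, so that $M(t_0)=\operatorname{coker}\bigl(\Phi(t_0)\colon A^{a}\to A^{b}\bigr)$ for every $t_0\in K$.

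First I would set up the truncated comparison. For each $j\ge 1$ and each $t_0$,
\[
\dim_K\bigl(M(t_0)/\mathfrak m^{j}M(t_0)\bigr)=b\cdot\dim_K(A/\mathfrak m^{j})-\operatorname{rank}_K G_j(t_0),
\]
where $G_j(t_0)$ is the $K$-matrix whose columns are the residues modulo $\mathfrak m^{j}$ of the products $\mathbf x^{\gamma}\cdot(\text{columns of }\Phi(t_0))$ with $|\gamma|<j$, expressed in the monomial basis of $(A/\mathfrak m^{j})^{b}$. Each entry of $G_j(t_0)$ is a coefficient of a power-series entry of $\Phi$, hence a polynomial in $t_0$; consequently $\operatorname{rank}_K G_j(t_0)$ is lower semicontinuous, so $\{t_0:\operatorname{rank}G_j(t_0)\ge\operatorname{rank}G_j(0)\}$ is open and contains $0$. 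Writing $h_{t_0}(j):=\dim_K(M(t_0)/\mathfrak m^{j}M(t_0))$, this gives $h_{t_0}(j)\le h_0(j)$ on a neighbourhood of $0$ for every fixed $j$; moreover $\mathfrak m^{L_0}M(0)=0$ for some $L_0$, so $h_0(j)=c$ for all $j\ge L_0$.

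Next I would reduce the theorem to a uniform bound. Suppose I can produce $c'<\infty$ and a neighbourhood of $0$ on which $\dim_K M(t_0)\le c'$. Since a finite-length module over the local ring $A$ has Loewy length at most its $K$-dimension, $\mathfrak m^{c'}M(t_0)=0$ there, whence $\dim_K M(t_0)=h_{t_0}(c')$. Taking $j=\max(c',L_0)$ and intersecting with the neighbourhood from the previous step, lower semicontinuity of the rank yields $\dim_K M(t_0)=h_{t_0}(j)\le h_0(j)=c=\dim_K M(0)$, which is exactly the assertion.

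The hard part, and the main obstacle, is this uniform bound, namely that $\dim_K M(t_0)$ is finite and bounded for $t_0$ near $0$; the pointwise inequalities $h_{t_0}(j)\le h_0(j)$ are useless on their own because the truncation level at which $h_{t_0}$ stabilizes could \emph{a priori} grow as $t_0\to 0$. Here I would use that finiteness of $M(0)$ forces $\dim_P M\le\dim_P(M/tM)+1=1$, so $\operatorname{Supp}(M)=V\bigl(I_b(\Phi)\bigr)$ has only finitely many one-dimensional components, and the finite length of $M(0)$ means none of them lies in the fibre $\{t=0\}$; hence $\operatorname{Supp}(M)$ meets only finitely many fibres in positive dimension, and $0$ is not among the corresponding parameter values. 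For $t_0$ near $0$ the fibre $\operatorname{Supp}(M)\cap\{t=t_0\}$ is therefore the origin of $\operatorname{Spec}A$, the Fitting ideal $I_b(\Phi)|_{t=t_0}=\operatorname{Fitt}_0(M(t_0))$ is $\mathfrak m$-primary, and $\dim_K M(t_0)\le b\cdot\dim_K\bigl(A/I_b(\Phi)|_{t=t_0}\bigr)$. The required bound is then an instance of the upper semicontinuity of fibre colength for the fixed ideal $I_b(\Phi)\subseteq P$ — equivalently, the fact that under the specialization $t\mapsto t_0$ the leading ideal of $I_b(\Phi)$ can only grow — and securing this semicontinuity in the present power-series setting is the technical heart of the proof.
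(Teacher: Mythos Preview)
The paper does not prove this proposition at all; it is quoted from \cite[Proposition~3.4]{GP17a} and used as a black box, so there is no argument in the present paper to compare yours against.

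Judged on its own, the truncation step and the reduction to a uniform bound on $\dim_K M(t_0)$ are correct, but the last paragraph, where you try to secure that bound, has two real problems.

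First, the inequality $\dim_P M\le\dim_P(M/tM)+1$ is false over the non-local ring $P=K[t][[\mathbf x]]$, because irreducible components of $\operatorname{Supp}(M)$ may miss $V(t)$ entirely. For $s\ge 2$ take $M=P/(1+t)\oplus P/(t,\mathfrak m)$: then $M(0)\cong K$, so $c=1$, yet $\operatorname{Supp}(M)$ contains $V(1+t)\cong\operatorname{Spec}K[[\mathbf x]]$, which has dimension $s$. Your support argument therefore needs at least a preliminary step disposing of the components not passing through the closed point $(\mathfrak m,t)$, after which the inequality can be applied in the localisation $P_{(\mathfrak m,t)}$; as written, the step is wrong.

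Second, the endgame is circular. You reduce the desired bound on $\dim_K M(t_0)$ to a bound on $\dim_K\bigl(A/I_b(\Phi)|_{t=t_0}\bigr)$ and then invoke ``upper semicontinuity of fibre colength for the fixed ideal $I_b(\Phi)$''. That is exactly the statement of the proposition applied to the cyclic $P$-module $P/I_b(\Phi)$, so nothing has been gained; your closing sentence essentially concedes this. To finish along these lines one must actually carry out a standard-basis argument in $K[t][[\mathbf x]]$ showing that initial ideals can only grow under the specialisation $t\mapsto t_0$ for $t_0$ in a Zariski neighbourhood of $0$. That is presumably the content of the proof in \cite{GP17a}, and it is not a step one can merely name.
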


\noindent{\textit{Proof of Theorem \ref{main}:}} By Proposition \ref{sufficient}, it suffices to prove the implication ($1.\Rightarrow 2.$). Assume that $A$ is $G$ $k$-determined. By finite determinacy we may assume that $A$ is a matrix of polynomials. Let $\characteristic(K)=p>0$ and $N\in \N$ such that $N>k$ and $p\nmid N$. Let $B\in M_{2,2}$
be a matrix as in Proposition \ref{existence}. Consider 
\[
B_t=B+tA\in Mat(2,2,K[t][{\bf x}])
\]
and the $K[t][[{\bf x}]]$-module
\[\widetilde T^e_{B_t}(GB_t)=\left\langle E_{ij}\cdot B_t, i,j=1,2\right\rangle+\left\langle  B_t\cdot E_{ij}, i,j=1,2\right\rangle+\left\langle \frac{\partial B_t}{\partial x_1},\ldots, \frac{\partial B_t}{\partial x_s} \right\rangle.
\]
Then by Proposition \ref{semi-continuity} there is a nonempty open subset $U\subset \A^1_K$ such that for all $t_0\in U$ we have
\[
\dim_K\bigl(M_{2,2}/\widetilde T^e_{B_{t_0}}(GB_{t_0})\bigr)\le \dim_KM_{2,2}/\widetilde T^e_{B}(GB)<\infty,
\]
where the second inequality follows from Proposition \ref{existence}. Let $t_0\in U$ and $t_0\ne 0$. Since $A\mathop\sim\limits^G t_0A \mathop\sim\limits^G  B_{t_0}$, we have
\[
\dim_K M_{2,2}/\widetilde T^e_{A}(GA)=\dim_K\bigl(M_{2,2}/\widetilde T^e_{B_{t_0}}(GB_{t_0})\bigr)<\infty,
\]
which is equivalent to the finiteness of the codimension of $\widetilde T_A(GA)$.\qed

\section*{Acknowledgements}
We would like to thank Professor Gert-Martin Greuel for  helpful suggestions and comments. Many thanks are also due to the anonymous referee for his/her suggestions. The first author was partially supported by the European Union's Erasmus+ programme. She would also like to thank the Vietnam Institute for Advanced Study in Mathematics (VIASM) for its support and  hospitality.
The second author was partially supported by CIMA -- Centro de Investiga\c{c}\~{a}o em Matem\'{a}tica e Aplica\c{c}\~{o}es, Universidade de \'{E}vora, project PEst-OE/MAT/UI0117/2014 (Funda\c{c}\~{a}o para a Ci\^{e}ncia e Tecnologia).

 Department of Mathematics and Statistics, Quy Nhon University, 170 An Duong Vuong, Quy Nhon, Vietnam

 Email address: phamthuyhuong@qnu.edu.vn

\vskip 7pt

Departamento de Matem\'{a}tica, Escola de Ci\^{e}ncias e Tecnologia, Centro de Inves\-ti\-ga\c{c}\~{a}o em Matem\'{a}tica e Aplica\c{c}\~{o}es, Instituto de Investiga\c{c}\~{a}o e Forma\c{c}\~{a}o Avan\c{c}ada, Universidade de \'{E}vora, Rua Rom\~{a}o Ramalho, 59, P--7000--671 \'{E}vora, Portugal

Email address: pmm@uevora.pt
\end{document}